\documentclass[12pt]{amsart}
\usepackage[active]{srcltx}
\usepackage{a4wide}
\usepackage{amsthm,amsfonts,amsmath,mathrsfs,amssymb}
\usepackage{dsfont}
\usepackage[T1]{fontenc}
\usepackage[utf8]{inputenc}
\usepackage{pdfpages}
\usepackage{graphicx}

\def\a{\alpha}               
            
       \def\t{\theta}       
         \def\r{\rho}

\def\e{\varepsilon}       \def\vf{\varphi}

\def\D{{\mathbb D}}     \def\T{{\mathbb T}}

\def\({\left(}       \def\){\right)}

\DeclareMathOperator{\re}{Re}

\newtheorem{prop}{\sc Proposition}
\newtheorem{lem}[prop]{\sc Lemma}
\newtheorem{thm}[prop]{\sc Theorem}


\begin{document}
\title[Non-normable spaces of analytic functions]{Non-normable spaces of analytic functions}
\author[I. Jim\'enez]{Iv\'an Jim\'enez}
\address{Departamento de Matem\'aticas, Universidad Aut\'onoma de
Madrid, 28049 Madrid, Spain}
\email{ivan.jimenezs@estudiante.uam.es}
\author[D. Vukoti\'c]{Dragan Vukoti\'c}
\address{Departamento de Matem\'aticas, Universidad Aut\'onoma de
Madrid, 28049 Madrid, Spain} \email{dragan.vukotic@uam.es}
\subjclass[2020]{30H05}
\keywords{Hardy spaces, Bergman spaces, normable spaces}
\date{01 August, 2024.}

\begin{abstract}
For each value of $p$ such that $0<p<1$, we give a specific example of two functions in the Hardy space $H^p$ and in the Bergman space $A^p$ that do not satisfy the triangle inequality. For Hardy spaces, this provides a much simpler proof than the one due to Livingston that involves abstract functional analysis arguments and an approximation theorem. For Bergman spaces, we have not been able to locate any examples or proofs in the existing literature.
\end{abstract}
\maketitle
\section{Introduction}
 \label{sect-intro}
\par
It is well known that, for a given positive measure $\mu$, the Lebesgue space $L^p(X,\mu)$ is a complete (Banach) space when equipped with the usual norm $\|f\|_p = \( \int_X |f|^p\,d\mu\)^{1/p}$ whenever $1\le p<\infty$, while the above expression $\|\cdot\|_p$ in general does not define a norm when $0<p<1$ since it fails to satisfy the triangle inequality.
\par
The standard Hardy and Bergman spaces of analytic functions in the unit disk $\D$, denoted respectively by $H^p$ and $A^p$, can be seen as closed subspaces of the spaces $L^p(\T,dm)$ and $L^p(\D,dA)$, where $\T$ denotes the unit circle, $dm(\t)=d\t/(2\pi)$ the normalized arc length measure on it, and $dA(z)=dx\,dy/\pi$ the normalized area measure on $\D$. These spaces are also complete with respect to their respective $L^p$-type norms when $1\le p<\infty$, and it is also known that they are not normed space when $0<p<1$. There are many known  monographs or texts that treat Hardy spaces or Bergman spaces \cite{DrS, D, DS, F, G, HKZ, JVA, K, R1} and this fact is mentioned in passing in many of them.
\par
However, it seems that the proof of this ``obvious'' fact is not contained in any of the texts quoted, not even among the exercises. The likely reason is that this is not so easy to prove in a direct way. The spaces $H^p$ and $A^p$ consist of holomorphic functions, which have many rigidity properties and therefore they cannot be varied in any flexible way, not even on very small sets. Thus, specific examples are not nearly as easy to construct as in the context of $L^p$ spaces where we have all the freedom of modifying measurable functions at our pleasure. Also, it is complicated in general to compute or estimate precisely the norms of functions in such spaces when $p\neq 2$.
\par
This specific issue was discussed from a different point of view in Livingston's paper \cite{L} from the 1950's. By a well-known theorem of Kolmogorov from 1934 \cite[Theorem~1.39]{R2}, a topological vector space is normable (has an equivalent normed topology) if and only if its origin has a convex bounded neighborhood. It was shown in \cite{L} that, for $0<p<1$, the open unit ball of $H^p$ contains no convex neighborhood of the origin, which implies the  non-normability of the space. To the best of our knowledge, we have not been able to locate a proof of the analogous well-known fact for $A^p$ spaces in the literature.
\par
It seems useful to have a `hard analysis' proof that the usual expression $\|\cdot\|_p$ is not a norm when  $0<p<1$; of course, a  `soft analysis' argument seems to be called for in order to prove the  stronger fact that actually no norm can be defined on the same space defining an equivalent topology. The purpose of this note is to fill this gap in the literature by giving specific examples of two functions, in both $H^p$ and $A^p$ spaces with $0<p<1$, that do not satisfy the triangle inequality. We hope that graduate students and other young researchers may find useful the examples given in this note.

\section{Preliminary facts}
 \label{sect-prelim}
\par
\subsection{Hardy spaces}
\label{subsect-Hardy}
\par
Let $\D$ denote the unit disc in the complex plane. It is well known \cite[Chapter~1]{D} that for any function $f$ analytic in $\D$ and  $0<p<\infty$, the integral means of order $p$ of $f$:
$$
M_p(r;f) = \( \int_0^{2\pi} |f(re^{i\t})|^p \frac{d\t}{2\pi} \)^{1/p}
$$
are increasing functions of $r\in (0,1)$. The Hardy space $H^p$ is the set of all analytic functions in $\D$ for which these means have finite limits: $\|f\|_{H^p}=\lim_{r\to 1^-}M_p(r;f)<\infty$. This is not a true norm if $0<p<1$ but the same notation and the term ``norm'' will still be used in this case. We list several properties of Hardy spaces that will be needed in the sequel.
\par
It is a well-known fact that $H^p$ functions have radial limits: $\tilde{f}(e^{i \t})=\lim_{r\to 1^-} f(r e^{i\t})$ almost everywhere on the unit circle and the norm can be computed in terms of these limits (see \cite{D}, \cite{F}, or \cite{G}):
$$
 \|f\|_{H^p} = \(\int_0^{2\pi} |\tilde{f}(e^{i\t})|^p \frac{d\t}{2\pi} \)^{1/p}\,.
$$
\par
A direct computation shows that the $H^p$ norm is invariant under rotations; in particular, if $g(z)=f(-z)$, then $\|g\|_{H^p}=\|f\|_{H^p}$.
\par
It is also well known that the composition with the function $z\mapsto z^2$ is bounded on any Hardy space. The following useful fact quantifies this.
\begin{lem}\label{lem-cvh}
Let $f\in H^p$ and $h(z)=f(z^2)$, for $z\in\D$. Then $h\in H^p$ and
$\|h\|_{H^p}=\|f\|_{H^p}$.
\end{lem}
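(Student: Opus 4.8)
The plan is to reduce everything to a direct computation of the integral means $M_p(r;h)$ and to relate them to those of $f$ by a change of variable. First I would observe that $h$ is analytic on $\D$, being the composition of the map $z\mapsto z^2$ (which sends $\D$ into itself) with $f$, so the means $M_p(r;h)$ are well defined and, by the monotonicity recalled above, increase with $r$.

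Next I would write out, for fixed $r\in(0,1)$, the identity
$$
M_p(r;h)^p = \int_0^{2\pi} |f(r^2 e^{2i\t})|^p\,\frac{d\t}{2\pi},
$$
using only that $h(re^{i\t}) = f(r^2 e^{2i\t})$. The key step is the substitution $u=2\t$: the Jacobian contributes a factor $1/2$, while the $2\pi$-periodicity of $u\mapsto |f(r^2 e^{iu})|^p$ lets one replace the resulting integral over $[0,4\pi]$ by twice the integral over $[0,2\pi]$. These two factors cancel and yield the clean identity
$$
M_p(r;h) = M_p(r^2;f)\qquad\text{for all } r\in(0,1).
$$

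Finally I would let $r\to 1^-$. Since $r^2\to 1^-$ as well, the right-hand side above tends to $\|f\|_{H^p}$, which is finite because $f\in H^p$. Hence the limit defining $\|h\|_{H^p}$ exists and is finite, so $h\in H^p$, and the two norms coincide.

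I do not expect any serious obstacle here: the whole argument rests on the periodicity of the integrand together with a linear change of variable, and there are no convergence subtleties since the integral means are finite for each fixed $r<1$. The only point requiring a little care is the bookkeeping in the substitution, namely checking that the Jacobian factor $1/2$ exactly cancels the doubling produced by folding $[0,4\pi]$ back onto $[0,2\pi]$ via periodicity.
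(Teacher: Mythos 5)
Your proof is correct, and the heart of it---the substitution $u=2\t$ whose Jacobian factor $\tfrac12$ cancels against the doubling produced by folding $[0,4\pi]$ back onto $[0,2\pi]$ by periodicity---is exactly the computation in the paper. The one genuine difference is where you perform it: the paper applies the change of variable directly to the boundary integral $\int_0^{2\pi}|\tilde f(e^{2i\t})|^p\,\tfrac{d\t}{2\pi}$, which tacitly presupposes that $h$ already lies in $H^p$ and that its boundary function is $\tilde f(e^{2i\t})$ a.e., whereas you work with the integral means at a fixed radius $r<1$, derive the identity $M_p(r;h)=M_p(r^2;f)$, and only then let $r\to 1^-$. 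Your version is slightly more self-contained: membership $h\in H^p$ and the equality of norms both drop out of the limit, with no appeal to the existence of radial limits for $h$. The paper's version is shorter but leans on the boundary-value machinery; either is acceptable, and nothing is missing from your argument.
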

\begin{proof}
Follows by a simple change of variable $t=2\t$ and periodicity:
$$
 \|h\|_{H^p}^p=\int_0^{2\pi} |\tilde{f}(e^{2i\t})|^p \frac{d\t}{2\pi} = \frac12 \int_0^{4\pi} |\tilde{f}(e^{i t})|^p \frac{d t}{2\pi} = \int_0^{2\pi} |\tilde{f}(e^{i t})|^p \frac{d t}{2\pi}=\|f\|_{H^p}^p\,.
$$
\end{proof}
\par
\subsection{Bergman spaces}
\label{subsect-Bergman}
\par
The Bergman ``norm'' is defined as
\begin{equation}
 \|f\|_{A^p}^p = \int_\D |f(z)|^p\,dA(z) = \int_0^1 2 r M_p^p(r,f) \,dr\,,
 \label{eqn-Ap-norm}
\end{equation}
where $dA$ denotes the normalized Lebesgue area measure on $\D$:
$$
 dA(z)=\frac{dx\,dy}{\pi}=\frac{r\,dr\,d\t}{\pi}\,, \qquad z=x+iy=re^{i \t}\,.
$$
In the most special case when $p=2$ and $f(z)=\sum_{n=0}^\infty a_n z^n\/$ in $\D$, using orthogonality, the norm can be computed explicitly in terms of the Taylor coefficients:
\begin{equation}
 \|f\|_{A^2}^2 = \sum_{n=0}^\infty \frac{|a_n|^2}{n+1}\,.
 \label{eqn-A2-norm}
\end{equation}
We refer the readers to \cite{DS} or \cite{HKZ} for these basic facts. \par
Of course, the expression in \eqref{eqn-Ap-norm} is a norm only when $1\le p<\infty$ but we shall again use the term ``norm'' also when $0<p<1$. To show explicitly that this is not a norm for small exponents $p$ becomes more involved than for the Hardy spaces. This is due to the lack of boundary values of functions in $A^p$ and to the fact that computing the norm involves integration with respect to area.
\par
It is readily checked that the Bergman ``norm'' is invariant under rotations, hence $f(z)$ and $f(-z)$ have the same norm. Again, it is well known that the composition with the function $z\mapsto z^2$ is bounded on any Bergman space. The following related exact formula will be useful.
\begin{lem}\label{lem-cv}
If $h\in A^p$, then
$$
 \int_\D |h(z)|^p\,dA(z) = 2 \int_\D |h(z^2)|^p |z|^2\,dA(z)\,.
$$
\end{lem}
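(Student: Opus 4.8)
The plan is to rewrite the right-hand integral in polar coordinates and match it against the integral-means expression \eqref{eqn-Ap-norm} for $\|h\|_{A^p}^p$. Writing $z=\rho e^{i\phi}$, so that $dA(z)=\rho\,d\rho\,d\phi/\pi$ and $|z|^2=\rho^2$, the integral on the right becomes
\[
 2\int_\D |h(z^2)|^p |z|^2\,dA(z) = \frac{2}{\pi}\int_0^1 \rho^3 \left(\int_0^{2\pi} |h(\rho^2 e^{2i\phi})|^p\,d\phi\right) d\rho .
\]
The computation then splits naturally into an angular part (the inner integral) and a radial part.

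For the angular part I would fix $\rho$ and dispose of the doubling of the argument exactly as in the proof of Lemma~\ref{lem-cvh}: the substitution $\psi=2\phi$ together with the $2\pi$-periodicity of $\psi\mapsto|h(\rho^2 e^{i\psi})|^p$ gives
\[
 \int_0^{2\pi} |h(\rho^2 e^{2i\phi})|^p\,d\phi = \int_0^{2\pi} |h(\rho^2 e^{i\psi})|^p\,d\psi = 2\pi\,M_p^p(\rho^2;h).
\]
Substituting this back leaves the single radial integral $4\int_0^1 \rho^3 M_p^p(\rho^2;h)\,d\rho$.

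For the radial part I would use the substitution $r=\rho^2$, so that $dr=2\rho\,d\rho$ and hence $\rho^3\,d\rho=\tfrac12\,r\,dr$. This turns the last integral into $2\int_0^1 r\,M_p^p(r;h)\,dr=\int_0^1 2r\,M_p^p(r;h)\,dr$, which by \eqref{eqn-Ap-norm} is precisely $\|h\|_{A^p}^p=\int_\D |h(z)|^p\,dA(z)$, the desired left-hand side.

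This argument is essentially a bookkeeping exercise and I do not anticipate a genuine obstacle; the only points demanding care are the angular step, where the factor $e^{2i\phi}$ must be handled by periodicity rather than a naive change of variables, and the need to track the constants and the Jacobian through the radial substitution $r=\rho^2$ so that the factors combine correctly to reproduce the weight $2r$ in \eqref{eqn-Ap-norm}. The hypothesis $h\in A^p$ guarantees the finiteness needed to justify the use of Fubini's theorem when separating the angular and radial integrations.
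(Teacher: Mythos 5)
Your proof is correct and follows essentially the same route as the paper's: pass to polar coordinates, remove the doubled angle by the substitution $\psi=2\phi$ together with periodicity (exactly as in Lemma~\ref{lem-cvh}), and then use the radial substitution $r=\rho^2$ to recover the weight $2r$ in \eqref{eqn-Ap-norm}. All constants check out, so there is nothing to add.
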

\begin{proof}
By the obvious change of variable $2\t=\vf$ and periodicity, followed by another change of variable $r^2=\r$, we obtain
\begin{eqnarray*}
 \int_\D |h(z^2)|^p |z|^2\,dA(z) &=& \int_0^1 2 r^3 \int_0^{2\pi} |h(r^2 e^{2i\t})|^p \frac{d\t}{2\pi}\,dr
\\
 &=& \int_0^1 2 r^3 \int_0^{2\pi} |h(r^2 e^{i\vf})|^p \frac{d\vf}{2\pi}\,dr
\\
 &=& \int_0^1 \r M_p^p(\r,h) \,d\r\,,
\end{eqnarray*}
and the statement follows.
\end{proof}
The following fact is well known. We include an indication of a proof for the sake of completeness.
\begin{lem}\label{lem-Ap}
Let $h(z)=(1-z)^{-\a}$, $\a>0$. Then $h\in A^p$ if and only if $p \a<2$.
\end{lem}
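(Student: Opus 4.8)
The plan is to read off that $|h(z)|^p = |1-z|^{-p\a}$, so that by the definition of the Bergman ``norm''
\[
\|h\|_{A^p}^p = \int_\D \frac{dA(z)}{|1-z|^{p\a}},
\]
and to observe that the only possible obstruction to finiteness is the singularity at the boundary point $z=1$. First I would fix a small $\delta>0$ and split $\D$ into the region $\D\setminus D(1,\delta)$, on which $|1-z|^{-p\a}$ is bounded and hence contributes a finite amount, and the cap $\D\cap D(1,\delta)$. Finiteness of the whole integral is thereby reduced to finiteness over this cap near $z=1$, where all the analysis takes place.

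On the cap I would introduce polar coordinates centred at the singularity, writing $1-z=\rho e^{i\psi}$ with $\rho=|1-z|$. Since this is a translation followed by the usual polar substitution, one has $dA(z)=\pi^{-1}\rho\,d\rho\,d\psi$, and the integral becomes
\[
\frac{1}{\pi}\int_0^\delta \rho^{1-p\a}\,|S(\rho)|\,d\rho,
\]
where $S(\rho)$ is the set of angles $\psi$ for which $z=1-\rho e^{i\psi}$ lies in $\D$. A direct computation of $|z|^2$ shows that the condition $|z|<1$ reads $\rho^2<2\rho\cos\psi$, i.e.\ $\cos\psi>\rho/2$; hence $S(\rho)$ tends to $(-\pi/2,\pi/2)$ as $\rho\to 0^+$ and its length is bounded below by a positive constant (and trivially above by $2\pi$) for all small $\rho$. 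Consequently the angular factor $|S(\rho)|$ is comparable to a constant, and convergence is governed purely by $\int_0^\delta \rho^{1-p\a}\,d\rho$, which is finite precisely when $1-p\a>-1$, that is, $p\a<2$.

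The only point requiring care is this elementary estimate on the angular measure $|S(\rho)|$, confirming that the cap near $z=1$ subtends an angle bounded away from $0$; once that is in hand the conclusion drops out of a single one-dimensional power integral. I expect this geometric bookkeeping, rather than any analytic subtlety, to be the main (and only mild) obstacle. An alternative route would be to invoke the classical Hardy--Littlewood growth estimates for the integral means $M_p^p(r,h)$ of $(1-z)^{-\a}$ together with the identity \eqref{eqn-Ap-norm}, reducing the question to the convergence of $\int_0^1 (1-r)^{1-p\a}\,dr$; but this presupposes the asymptotics of $M_p$ and is therefore less self-contained than the direct two-dimensional computation above.
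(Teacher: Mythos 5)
Your argument is correct and is essentially the paper's own proof: the paper likewise integrates in polar coordinates centred at $z=1$, writing $z=1-re^{i\theta}$ with $0<r<2\cos\theta$, and reduces the question to the convergence of $\int_0 r^{1-p\alpha}\,dr$. Your version merely spells out the details the paper leaves as ``elementary calculus'' (the bounded contribution away from $z=1$ and the fact that the angular measure $|S(\rho)|$ stays bounded between positive constants), so there is nothing to add.
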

\begin{proof}
This is easily established by integrating in polar coordinates centered at $z=1$ rather than at the origin: write $z=1-r e^{i\t}$, where $-\pi/2<\t<\pi/2$ and $0<r<2\cos \t$. The rest is also elementary calculus.
\end{proof}

\section{Examples}
 \label{sect-examples}
\par
\subsection{A Hardy space example}
\label{subsect-example}
It seems intuitively clear that the nice properties that $H^p$ spaces enjoy should allow us to present simple examples. Actually, it turns out that there is one single example that works for every $p$ with $0<p<1$.
\begin{thm}\label{thm-large-p}
Let $0<p<1$. Then the functions $f$ and $g$, defined respectively by
$$
 f(z)=\frac{1+z}{1-z}\,, \quad g(z)=-f(-z)=-\frac{1-z}{1+z}\,,
$$
both belong to $H^p$ but fail to satisfy the triangle inequality for $\|\cdot\|_{H^p}$.
\end{thm}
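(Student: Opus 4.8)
The plan is to reduce everything to the boundary-value formula for the $H^p$ norm and then to a single elementary integral comparison, with no need to evaluate any integral explicitly. First I would settle membership: on the unit circle, writing $1\pm e^{i\theta}$ in half-angle form gives $f(e^{i\theta})=i\cot(\theta/2)$, so $|\tilde{f}(e^{i\theta})|=|\cot(\theta/2)|$, which behaves like a constant times $|\theta|^{-1}$ near the two singular points $\theta=0,2\pi$. Hence $\int_0^{2\pi}|\cot(\theta/2)|^p\,d\theta$ converges precisely because $p<1$, and $f\in H^p$. Since $g(z)=-f(-z)$ is a rotate of $f$ multiplied by the unimodular constant $-1$, the stated rotation invariance of the norm gives $g\in H^p$ with $\|g\|_{H^p}=\|f\|_{H^p}$.

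The crux is to compute $\|f+g\|_{H^p}$ and to compare it with $2\|f\|_{H^p}$. A direct calculation gives
$$f(z)+g(z)=\frac{1+z}{1-z}-\frac{1-z}{1+z}=\frac{4z}{1-z^2}=z\,k(z^2),\qquad k(w)=\frac{4}{1-w}.$$
Because multiplication by the inner factor $z$ leaves the boundary modulus unchanged, Lemma~\ref{lem-cvh} yields $\|f+g\|_{H^p}=\|k(z^2)\|_{H^p}=\|k\|_{H^p}$. Expressing the boundary moduli in half-angle form and using $|1-e^{i\theta}|^p=2^p|\sin(\theta/2)|^p$, I would record the two key quantities
$$\|f\|_{H^p}^p=\int_0^{2\pi}\frac{|\cos(\theta/2)|^p}{|\sin(\theta/2)|^p}\,\frac{d\theta}{2\pi},\qquad \|f+g\|_{H^p}^p=2^p\int_0^{2\pi}\frac{1}{|\sin(\theta/2)|^p}\,\frac{d\theta}{2\pi}.$$

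Finally, since $\|f\|_{H^p}=\|g\|_{H^p}$, the triangle inequality $\|f+g\|_{H^p}\le\|f\|_{H^p}+\|g\|_{H^p}=2\|f\|_{H^p}$ is equivalent, after raising both sides to the $p$-th power, to the integral inequality $\int_0^{2\pi}|\sin(\theta/2)|^{-p}\,\frac{d\theta}{2\pi}\le\int_0^{2\pi}|\cot(\theta/2)|^p\,\frac{d\theta}{2\pi}$, the common factor $2^p$ cancelling. But the left integrand strictly dominates the right one, since $|\cos(\theta/2)|^p\le1$ with strict inequality for almost every $\theta$; hence the reverse strict inequality holds and the triangle inequality fails. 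I expect the only point requiring care to be the norm bookkeeping: confirming that the factor $4$ coming from $f+g$ and the factor $2^p$ coming from $|1-e^{i\theta}|^p$ combine so that both norms are integrals against the \emph{same} singular weight $|\sin(\theta/2)|^{-p}$, differing only by the harmless factor $|\cos(\theta/2)|^p\le1$. Once this alignment is in place the comparison is immediate, and the fact that the (convergent but singular) integrals never need to be computed is what keeps the argument short.
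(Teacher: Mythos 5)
Your argument is correct and follows essentially the same route as the paper: the same identity $f+g=\frac{4z}{1-z^2}$, the same appeal to Lemma~\ref{lem-cvh} (plus rotation invariance), and the same key observation that $|1+e^{i\theta}|=2|\cos(\theta/2)|\le 2$ with strict inequality almost everywhere --- you have merely written the resulting norm comparison out explicitly in half-angle form. The only small caveat is the membership step: finiteness of $\int_0^{2\pi}|\tilde f(e^{i\theta})|^p\,d\theta$ does not by itself place $f$ in $H^p$, but your singularity estimate applies verbatim to $M_p(r;f)$ uniformly in $r<1$, which does.
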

\begin{proof}
It is a well-known exercise \cite[Chapter~1, Problem~1]{D} that, for $h(z)=\frac{1}{1-z}$, we have $h\in H^p$ if and only if $0<p<1$, and the same is easily seen for the closely related function $f$. By the basic properties, $\|f\|_{H^p}=\|g\|_{H^p}$. Also, a direct computation shows that
$$
 f(z)+g(z)=\frac{4z}{1-z^2}\,.
$$
Taking into account that $|z|=1$ on the unit circle, as well as  Lemma~\ref{lem-cvh} and the inequality $|1+z|\le 2$ which is actually strict for all $z\in\T \setminus\{1\}$, we obtain that
$$
 \|f+g\|_{H^p}=\left\|\frac{4}{1-z^2}\right\|_{H^p}=4\left\|\frac{1}{1-z}\right\|_{H^p} > 2 \left\|\frac{1+z}{1-z}\right\|_{H^p}=2\|f\|_{H^p}=\|f\|_{H^p}+\|g\|_{H^p}\,,
$$
showing that the triangle inequality fails in this case.
\end{proof}
\par
Once discovered, the last example may even look trivial. However, it should be mentioned that it was not the first example of this kind that we found; the earlier examples required a lot more involved calculations and estimates. Moreover, the ``obvious'' example one would first think of does not work: for the function $h$ mentioned in the proof and the related modified function $k(z)=-h(-z)$, it can be easily checked by a completely similar argument that we have equality in the triangle inequality: $\|h+k\|_{H^p}=\|h\|_{H^p}+ \|k\|_{H^p}$.
\subsection{Bergman space examples}
\label{subsect-examples}
In this case, we do not have one single example covering the entire range of values of the exponent $p$. Instead, we exhibit two different types of examples, depending on the value of $p$. The first example is a modification of the Hardy space example given earlier.
\par
\begin{thm}\label{thm-large-p}
Let $\frac12\le p<1$ and let $\e \le 1$ and $\/(1-p)/p\le\e<2(1-p)/p$. Then the functions $f$ and $g$, given by
$$
 f(z)=\frac{(1+z)^{2-\e}}{(1-z)^{2+\e}}\,, \quad g(z)=-f(-z)=- \frac{(1-z)^{2-\e}}{(1+z)^{2+\e}},
$$
fail to satisfy the triangle inequality for $\|\cdot\|_{A^p}$.
\end{thm}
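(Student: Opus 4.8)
The plan is to follow the same strategy as in the Hardy-space example: show that both functions lie in $A^p$, reduce the computation of $\|f+g\|_{A^p}$ to an integral over $\D$ by the substitution $z\mapsto z^2$, and then compare with $2\|f\|_{A^p}$. First I would record that $f$ and $g$ are genuinely analytic in $\D$: since $\re(1\pm z)>0$ there, the powers $(1\pm z)^{2\mp\e}$ and $(1\pm z)^{-(2\pm\e)}$ are unambiguously defined through the principal branch. The only singularity of $f$ is at $z=1$, where the numerator $(1+z)^{2-\e}$ is bounded and nonvanishing; thus membership $f\in A^p$ is governed by $(1-z)^{-(2+\e)}$, and Lemma~\ref{lem-Ap} gives $f\in A^p$ precisely when $(2+\e)p<2$, i.e. $\e<2(1-p)/p$, which is exactly the upper constraint imposed on $\e$. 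By rotation invariance of the Bergman norm, $\|g\|_{A^p}=\|f\|_{A^p}$, so the triangle inequality reduces to the strict inequality $\|f+g\|_{A^p}>2\|f\|_{A^p}$.

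Next I would compute the sum explicitly. Putting the two terms over the common denominator $(1-z^2)^{2+\e}$ and using $(1+z)^4-(1-z)^4 = 8z(1+z^2)$, one gets
$$ f(z)+g(z)=\frac{8z(1+z^2)}{(1-z^2)^{2+\e}}\,. $$
Hence $\|f+g\|_{A^p}^p = 8^p\int_\D |z|^p |1+z^2|^p |1-z^2|^{-(2+\e)p}\,dA(z)$. The key step is to process the factor depending on $z^2$ by the same change of variables as in Lemma~\ref{lem-cv} (write $\vf=2\t$, use periodicity, then set $\r=r^2$); the extra factor $|z|^p=\r^{p/2}$ coming from the $8z$ in the numerator turns the area element into a weighted one, and I expect to arrive at
$$ \|f+g\|_{A^p}^p = \frac{8^p}{2}\int_\D |w|^{p/2-1}\,\frac{|1+w|^p}{|1-w|^{(2+\e)p}}\,dA(w)\,. $$
At the same time $2^p\|f\|_{A^p}^p = 2^p\int_\D |1+w|^{(2-\e)p}|1-w|^{-(2+\e)p}\,dA(w)$, and both integrals are finite under $\e<2(1-p)/p$ (near $w=0$ the weight $|w|^{p/2-1}$ is integrable since $p>0$).

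It then suffices to prove the pointwise inequality between the two integrands. After cancelling the common factor $|1-w|^{-(2+\e)p}$ and dividing by $|1+w|^p$ (legitimate off the null set $\{w=-1\}$), the claim reduces to
$$ 2^{2p-1}\,|w|^{p/2-1}\ \ge\ |1+w|^{(1-\e)p}\qquad(w\in\D)\,. $$
This is where both remaining hypotheses enter cleanly. On one hand $|w|^{p/2-1}\ge 1$ because $|w|\le 1$ and $p/2-1<0$; on the other hand $|1+w|\le 2$ together with $(1-\e)p\ge 0$ (here I use $\e\le 1$) gives $|1+w|^{(1-\e)p}\le 2^{(1-\e)p}$. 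Thus the inequality follows from $2^{2p-1}\ge 2^{(1-\e)p}$, which is equivalent to $\e\ge(1-p)/p$ --- precisely the lower constraint on $\e$ (and this is compatible with $\e\le 1$ exactly when $p\ge\tfrac12$). Since $|w|^{p/2-1}>1$ on the full-measure set $\{0<|w|<1\}$, the pointwise inequality is strict almost everywhere, so integrating yields $\|f+g\|_{A^p}^p>2^p\|f\|_{A^p}^p$, i.e. $\|f+g\|_{A^p}>\|f\|_{A^p}+\|g\|_{A^p}$.

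The main obstacle is conceptual rather than computational: one has to hit upon the factorization that makes $f+g$ depend on $z^2$ (so that the Lemma~\ref{lem-cv} substitution applies) and then realize that, after the substitution, the extra weight $|w|^{p/2-1}\ge 1$ is exactly what is needed so that a single \emph{pointwise} estimate --- rather than a delicate global comparison of two singular integrals --- closes the argument. Verifying that the admissible range of $\e$ produced by the pointwise bound ($(1-p)/p\le\e$) matches the range forcing $f\in A^p$ ($\e<2(1-p)/p$), and that both are compatible with $\e\le 1$ only for $p\ge\tfrac12$, is the part that pins down the precise hypotheses of the statement.
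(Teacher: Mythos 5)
Your proof is correct and follows essentially the same route as the paper: the only (cosmetic) difference is that you push the $w=z^2$ substitution onto the $\|f+g\|_{A^p}$ integral, producing the weight $|w|^{p/2-1}\ge 1$, whereas the paper applies Lemma~\ref{lem-cv} to $\|f\|_{A^p}$ and compares integrands in the $z$ variable --- after setting $w=z^2$ your pointwise inequality $2^{2p-1}|w|^{p/2-1}\ge |1+w|^{(1-\e)p}$ is literally the paper's $2^{2p-1}\ge |z|^{2-p}|1+z^2|^{(1-\e)p}$. The membership argument via Lemma~\ref{lem-Ap} and the role of the two constraints on $\e$ also match the paper exactly.
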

\begin{proof}
Note that $\e=1$ if and only if $p=1/2$; otherwise we have a whole interval to choose the value of $\e$. Also note that the numerator in the expression for $f$ is bounded in view of the condition $\e\le 1$ while $\e<2(1-p)/p$ implies that $p(2+\e)<2$. Thus, $f\in A^p$ by Lemma~\ref{lem-Ap}.
\par
We already know that $\|f\|_{A^p}=\|g\|_{A^p}$. Since
$$
 f(z)+g(z) = \frac{(1+z)^4-(1-z)^4}{(1-z^2)^{2+\e}} = \frac{8z (1+z^2)}{(1-z^2)^{2+\e}}\,,
$$
the desired inequality $\|f+g\|_{A^p}>\|f\|_{A^p}+\|g\|_{A^p}$ is equivalent to the statement that $\|f+g\|_{A^p}^p>2^p\|f\|_{A^p}^p$, that is,
$$
 2^{3p} \int_\D \frac{|z|^p |1+z^2|^p}{|1-z^2|^{(2+\e)p}}\,dA(z) > 2^p \int_\D \frac{|1+z|^{(2-\e)p}}{|1-z|^{(2+\e)p}}\,dA(z) = 2^{p+1} \int_\D \frac{|z|^2 |1+z^2|^{(2-\e)p}}{|1-z^2|^{(2+\e)p}}\,dA(z)\,,
$$
by Lemma~\ref{lem-cv} applied to the function $f$. This is clearly equivalent to
$$
 \int_\D \frac{|z|^p |1+z^2|^p \( 2^{2p-1} - |z|^{2-p} |1+z^2|^{(1-\e)p} \)}{|1-z^2|^{(2+\e)p}}\,dA(z) > 0.
$$
By our choice of $\e$ and restrictions on $p$, we have $(2p-1)-(1-\e)p = p +\e p -1\ge 0$ and $(1-\e)p\ge 0$, hence
$$
 2^{2p-1} - |z|^{2-p} |1+z^2|^{(1-\e)p} > 2^{2p-1} - 2^{(1-\e)p} \ge 0
$$
for all $z\in \D$, and the desired integral inequality follows.
\end{proof}
We now turn to the remaining range of exponents. To this end, the following simple inequality will be useful.
\begin{lem}\label{lem-elem}
If $a$, $b>0$ and $q>1$, then $|a^q-b^q|\ge |a-b|^q$.
\end{lem}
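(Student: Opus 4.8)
The plan is to reduce to the case $a \ge b$ by symmetry, since interchanging $a$ and $b$ leaves both $|a^q - b^q|$ and $|a-b|^q$ unchanged. Assuming $a \ge b > 0$, both absolute values may be dropped, and the assertion becomes $a^q - b^q \ge (a-b)^q$, which I would rewrite in the equivalent additive form
$$
 (a-b)^q + b^q \le a^q\,.
$$

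The heart of the matter is therefore the elementary superadditivity of the power function $t\mapsto t^q$ for $q>1$: for all $x,y\ge 0$,
$$
 x^q + y^q \le (x+y)^q\,.
$$
I would establish this by normalizing. Setting $s=x+y$ (the case $s=0$ being trivial) and $u=x/s$, $v=y/s$, one has $u,v\in[0,1]$ with $u+v=1$. Since $q>1$ forces $t^q\le t$ for every $t\in[0,1]$, it follows that $u^q+v^q \le u+v = 1$, and multiplying through by $s^q$ yields the claimed inequality.

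Finally, I would apply this superadditivity to the choice $x=a-b\ge 0$ and $y=b>0$, so that $x+y=a$. This gives $(a-b)^q + b^q \le a^q$, which is precisely the reduced form of the statement; tracing back through the reduction recovers $|a^q-b^q|\ge|a-b|^q$ in full generality.

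There is essentially no serious obstacle here. The only point requiring a word of justification is the pointwise bound $t^q\le t$ on $[0,1]$ for $q>1$, which is immediate because $t^{q-1}\le 1$ there. The sole care needed is to dispose of the degenerate cases ($a=b$, or $s=x+y=0$) so that the normalization is legitimate, but these are trivial.
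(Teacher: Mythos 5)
Your proof is correct, but it takes a genuinely different route from the one in the paper. The paper also reduces to the case $a\ge b>0$, but then normalizes by setting $x=a/b$ and verifies the one-variable inequality $(x^q-1)-(x-1)^q\ge 0$ for $x\ge 1$ by elementary calculus, observing that the left-hand side vanishes at $x=1$ and is non-decreasing on $[1,+\infty)$. You instead recast the claim as the superadditivity $(a-b)^q+b^q\le a^q$ and prove the general superadditivity $x^q+y^q\le (x+y)^q$ for $x,y\ge 0$ by normalizing to $u+v=1$ and using the pointwise bound $t^q\le t$ on $[0,1]$. Your argument is entirely derivative-free and somewhat more self-contained, since it replaces the monotonicity check (which implicitly requires differentiating and comparing $x^{q-1}$ with $(x-1)^{q-1}$) by the single observation $t^{q-1}\le 1$ on $[0,1]$; the paper's version is shorter on the page but leans on an unstated calculus verification. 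Both are complete and valid.
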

\begin{proof}
Without loss of generality, we may assume that $a\ge b>0$. Then, writing $x=a/b$, the inequality reduces to $(x^q-1)- (x-1)^q\ge 0$, for $x\ge 1$, and this is easily proved by elementary calculus since the function on the left-hand side is non-decreasing in $[1,+\infty)$.
\end{proof}
\par
Our next example covers the remaining range of values of $p$ and, actually, a somewhat larger interval of values.
\begin{thm}\label{thm-small-p}
Let $0<p<\frac12$ and define
$$
 f(z)=(1+z)^{4/p}\,, \quad g(z)=-f(-z)=-(1-z)^{4/p}\,,
$$
choosing the appropriate branch of the complex logarithm so that, say, $\log 1=0$. Then the functions $f$ and $g$ both belong to $A^p$ but 	 fail to satisfy the triangle inequality for $\|\cdot\|_{A^p}$.
\end{thm}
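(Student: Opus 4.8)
The plan is to follow the same high-level scheme as in the previous theorem. Since the Bergman norm is rotation invariant and $g(z)=-f(-z)$, we have $\|f\|_{A^p}=\|g\|_{A^p}$, so the triangle inequality $\|f+g\|_{A^p}\le\|f\|_{A^p}+\|g\|_{A^p}$ that we want to violate is equivalent, after raising to the power $p$, to $\|f+g\|_{A^p}^p>2^p\|f\|_{A^p}^p$. First I would dispose of the membership claim: because $|1+z|<2$ throughout $\D$, the function $f(z)=(1+z)^{4/p}$ is bounded and hence lies in every $A^p$; moreover $|f(z)|^p=|1+z|^4$, so that $\|f\|_{A^p}^p=\int_\D|1+z|^4\,dA(z)$, an elementary integral.

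The crux is to bound $\|f+g\|_{A^p}^p=\int_\D\big|(1+z)^{4/p}-(1-z)^{4/p}\big|^p\,dA(z)$ from below, and the main obstacle is that this is the modulus of a difference of two complex powers, which is not transparent. My idea is to pass to moduli in two steps. First, the reverse triangle inequality for complex numbers gives $\big|(1+z)^{4/p}-(1-z)^{4/p}\big|\ge\big||1+z|^{4/p}-|1-z|^{4/p}\big|$, using $|(1\pm z)^{4/p}|=|1\pm z|^{4/p}$ for the chosen branch. Second, I would apply Lemma~\ref{lem-elem} with the exponent $q=1/p>2$ and $a=|1+z|^4$, $b=|1-z|^4$; raising the resulting inequality to the power $p$ then yields the clean pointwise bound $|f(z)+g(z)|^p\ge\big||1+z|^4-|1-z|^4\big|$.

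It remains to evaluate two elementary integrals in polar coordinates $z=re^{i\theta}$. For the lower bound I would factor the difference of squares, $|1+z|^4-|1-z|^4=8r(1+r^2)\cos\theta$, so that $\int_\D\big||1+z|^4-|1-z|^4\big|\,dA=\frac{256}{15\pi}$, while a direct expansion of $|1+z|^4=(1+r^2+2r\cos\theta)^2$ gives $\int_\D|1+z|^4\,dA=\frac{10}{3}$. The desired strict inequality $\frac{256}{15\pi}>2^p\cdot\frac{10}{3}$ then reduces to $2^p<\frac{256}{50\pi}\approx1.63$, which holds comfortably for every $p\in(0,\tfrac12)$ since there $2^p<\sqrt2\approx1.41$. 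This final comparison is the only place where $p$ enters, and since it in fact holds up to roughly $p\approx0.70$, it accounts for the remark that the example covers a somewhat larger range than $0<p<\tfrac12$.
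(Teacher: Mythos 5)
Your proposal is correct and follows essentially the same route as the paper: reverse triangle inequality, then Lemma~\ref{lem-elem} with $q=1/p$, $a=|1+z|^4$, $b=|1-z|^4$, then the polar-coordinate evaluation giving $\frac{256}{15\pi}>2^p\cdot\frac{10}{3}$. The only (immaterial) difference is that you compute $\|f\|_{A^p}^p=\int_\D|1+z|^4\,dA$ by direct expansion in polar coordinates, whereas the paper reads it off from the Taylor-coefficient formula \eqref{eqn-A2-norm} for the $A^2$ norm of $(1+z)^2$.
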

\begin{proof}
Again, it is clear that $\|f\|_{A^p}=\|g\|_{A^p}$. We can compute this value by using the formula \eqref{eqn-A2-norm} to obtain
$$
 \|f\|_{A^p}^p = \int_\D |1+2z+z^2|^2\,dA(z) = 1 + 2 + \frac13 = \frac{10}{3}\,.
$$
Next, we use the formulas defining the functions $f$ and $g$, the standard triangle inequality for complex numbers, then employ the elementary inequality from Lemma~\ref{lem-elem}
with
$$
 q=1/p\,, \quad a=|1+z|^4\,, \quad b=|1-z|^4\,,
$$
then some basic algebra of complex numbers, and afterwards express the integral obtained in polar coordinates and use Fubini's theorem to obtain:
\begin{eqnarray*}
 \|f+g\|_{A^p}^p &=&\int_\D |(1+z)^{4/p}-(1-z)^{4/p}|^p d A (z)
\\
 &\ge &  \int_\D \left| |1+z|^{4/p}-|1-z|^{4/p} \right|^p d A (z)
\\
 &\ge &  \int_\D \left| |1+z|^4-|1-z|^4 \right| d A (z)
\\
&=&  \int_\D \left| (1+|z|^2+2\re z)^2 - (1+|z|^2-2\re z)^2 \right| d A (z)
\\
 &=& 8 \int_\D (1+|z|^2) |\re z | d A (z)
\\
 &=& \frac{8}{\pi} \int_0 ^1 r^2 (1+r^2)\,dr \cdot 2 \int_{-\pi/2}^{\pi/2}\cos\t\,d\t
\\
 &=& \frac{2^8}{15\pi}
\\
 &>& 2^p \frac{10}{3}
\\
 &=& (\|f\|_{A^p}+\|g\|_{A^p})^p\,,
\end{eqnarray*}
whenever $p<\frac12$, as is easily seen by computation. Actually, the last inequality holds for a larger range of values of $p$ which we do not need for our purpose.
\end{proof}
\par
For the sake of simplicity, we have avoided discussing the weighted Bergman spaces $A^p_\a$ with standard radial weights. It does not appear too difficult to find related examples in such cases as well.
\par\smallskip
\textbf{Acknowledgments}. The authors would like to thank Ole F. Brevig and Raymond Mortini for their interest in the first draft of the paper and for some useful comments.
\medskip


\end{document}